\newtheorem{lm}{Lemma}
\newtheorem*{Thm}{Theorem}
\begin{document}
\title{ On q-convex with corners functions in complex manifolds and extension of line bundles}
\author{}
\noindent
\maketitle
\begin{center}
\em{Youssef Alaoui}\\
\em{y.alaoui@iav.ac.ma}\\
\end{center}

\noindent
\em{Department of Mathematics,
Hassan II Institute of Agronomy}\\ 
\em{and Veterinary Sciences,
Madinat Al Irfane, BP 6202, Rabat, 10101, Morocco,}\\
\date{}
\linespread{1.3}
\date{}
\begin{abstract}
In this article, we prove that if $X$
is a complex manifold of dimension $n\geq 4$ such that
there exists a $q$-convex with corners function $f\in F_{q}(X)$, then every holomorphic line bundle over $\{f>c\}$
extends uniquely to $X$ if $1\leq q\leq n-3$. This generalizes a well-known result obtained in \cite{ref5} for $q$-complete 
with corners complex manifolds with a corresponding exhaustion function $f \in F_{q}(X)$, when $n \geq 3q$.
\end{abstract}
\maketitle
\section{Introduction}
It was shown in \cite{ref5} that
if $X$ is a $q$-complete with corners complex manifold of dimension $n\geq 3q$ with a corresponding exhaustion function $\phi$ and, if
$L$ is a holomorphic line bundle on $X\setminus K$, where $K=\{z\in X : \phi(z)\leq c\}$, then there exists a unique holomorphic
line bundle $\tilde{L}$ over $X$ such that $\tilde{L}|_{X\setminus K}=L$ when $1\leq q\leq n-2$. The extension is unique modulo isomorphism.\\
\hspace*{.1in}Chen ~\cite{ref4} has given a counter example to extension for a pseudoconvex domain $\Omega\subset\subset\mathbb{C}^{n}$ 
when the compact $K\subset \Omega$ is of general shape in any dimension $n\geq 2$.\\
\hspace*{.1in}It has been proved earlier by Ivashkovich ~\cite{ref7} that there exist a two
dimensional real analytic totally real manifold $\Omega$ in $\mathbb{C}^{2}$, a non-compact subset $K\subset \Omega$ 
and a line bundle $L$ on $\Omega\setminus K$ which cannot be extended holomorphically to $\Omega$, even under the assymption that the bundle
is topologically trivial.\\
\hspace*{.1in}The main purpose of this article is to generalize the above result obtained in ~\cite{ref5}
to complex manifolds $X$ of dimension $n\geq 4$ which are not necessarily $q$-complete with corners but on which there exist a $q$-convex with corners function $f\in F_{q}(X)$ with $1\leq q\leq n-3$. More precisely we prove :
\begin{Thm}
Let $X$ be a complex manifold of dimension $n\geq 4$ and $\xi_{0}\in X$. Suppose that there exists a $q$-convex
with corners function $f\in C^{0}(X )$ with $1\leq q\leq n-3$, and let $Y=\{z\in X:
f(z)>f(\xi_{0})\}$. Then the restriction map
$$H^{p}(X, {\mathcal{O}^{*}})\rightarrow H^{p}(Y, {\mathcal{O}^{*}})$$
is bijective if $p=0, 1, 2$\\
and injective if $p=3$.
\end{Thm}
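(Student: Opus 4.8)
The plan is to derive the statement from the exponential sheaf sequence $0 \to \mathbb{Z} \to \mathcal{O} \xrightarrow{\exp} \mathcal{O}^{*} \to 0$, whose long exact cohomology sequence is functorial for the open inclusion $Y \hookrightarrow X$. Writing $K = \{f \le f(\xi_{0})\} = X \setminus Y$ and passing to cohomology with supports in $K$, every restriction map $H^{p}(X, \mathcal{F}) \to H^{p}(Y, \mathcal{F})$ sits in the exact sequence
\[
\cdots \to H^{p}_{K}(X, \mathcal{F}) \to H^{p}(X, \mathcal{F}) \to H^{p}(Y, \mathcal{F}) \to H^{p+1}_{K}(X, \mathcal{F}) \to \cdots,
\]
so it is bijective whenever $H^{p}_{K}(X, \mathcal{F}) = H^{p+1}_{K}(X, \mathcal{F}) = 0$ and injective whenever $H^{p}_{K}(X, \mathcal{F}) = 0$. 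Hence it suffices to prove $H^{p}_{K}(X, \mathcal{O}^{*}) = 0$ for $p \le n - q$. Since $q \le n-3$ forces $n - q \ge 3$, this yields bijectivity of the $\mathcal{O}^{*}$-restriction for $p \le n - q - 1$ (in particular for $p = 0,1,2$) and injectivity for $p = n - q$ (in particular for $p = 3$), which is exactly the asserted conclusion.

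To reach this I would first establish, in the same range $p \le n-q$, the analytic and topological halves separately. For the analytic half, $H^{p}_{K}(X, \mathcal{O}) = 0$ for $p \le n - q$, one recognises $K$ as a sublevel set of the $q$-convex with corners function $f$, so that $Y$ is its concave side and holomorphic cohomology extends across $\{f = f(\xi_{0})\}$ in low degrees. I would reduce the corner and merely continuous case to smooth $q$-convex pieces by the Diederich--Fornæss technique (a $q$-convex with corners function is locally a finite maximum of $q$-convex functions) and then run the Andreotti--Grauert bumping scheme: cross non-critical levels by Mayer--Vietoris with $q$-convex bumps, using the vanishing of the relevant $\bar\partial$-cohomology, and pass critical levels with control. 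For the topological half, $H^{p}_{K}(X, \mathbb{Z}) = 0$ for $p \le n - q$, I would invoke Morse theory for $f$: the Morse index of a $q$-convex function is at most $n + q - 1$, so up to homotopy $X$ is built from $Y$ by attaching cells of real dimension $\ge n - q + 1$; consequently $H^{p}_{K}(X, \mathbb{Z}) \cong H^{p}(X, Y; \mathbb{Z})$ vanishes for $p \le n-q$, the corner/continuous case again being handled by approximation.

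With both inputs in hand I would assemble the result through the exponential long exact sequence in local cohomology,
\[
\cdots \to H^{p}_{K}(X, \mathbb{Z}) \to H^{p}_{K}(X, \mathcal{O}) \to H^{p}_{K}(X, \mathcal{O}^{*}) \to H^{p+1}_{K}(X, \mathbb{Z}) \to H^{p+1}_{K}(X, \mathcal{O}) \to \cdots .
\]
For $p \le n - q - 1$ both neighbours $H^{p}_{K}(X, \mathcal{O})$ and $H^{p+1}_{K}(X, \mathbb{Z})$ vanish, so $H^{p}_{K}(X, \mathcal{O}^{*}) = 0$ at once. The only term not killed outright is the critical one $p = n - q$, where the two vanishing statements reduce the sequence to the identification
\[
H^{n-q}_{K}(X, \mathcal{O}^{*}) \cong \ker\big( H^{n-q+1}_{K}(X, \mathbb{Z}) \to H^{n-q+1}_{K}(X, \mathcal{O}) \big).
\]

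The main obstacle is precisely this top degree: one must show that the first non-vanishing integral local-cohomology group injects into its holomorphic counterpart. This is where the hypothesis $q \le n-3$ is tight, since the minimal-dimension cells of real dimension $n - q + 1$ coming from index-$(n+q-1)$ critical points are exactly what can populate $H^{n-q+1}_{K}(X, \mathbb{Z})$, and one must verify that none of these classes dies in $H^{n-q+1}_{K}(X, \mathcal{O})$. I would attack the kernel by factoring the map through $\mathbb{Z} \to \mathbb{C} \to \mathcal{O}$: the free part is controlled by a type argument on the mixed Hodge structure of $H^{n-q+1}_{K}(X, \mathbb{C})$, the concavity governed by $f$ ensuring the relevant classes carry no positive holomorphic form degree, while the torsion is disposed of by identifying the surviving classes with first Chern classes and using the injectivity of $H^{n-q}(X, \mathcal{O}) \to H^{n-q}(Y, \mathcal{O})$ already obtained. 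Once $H^{n-q}_{K}(X, \mathcal{O}^{*}) = 0$ is secured, the local-cohomology criterion of the first paragraph gives bijectivity of the $\mathcal{O}^{*}$-restriction for $p = 0,1,2$ and injectivity for $p = 3$; the case $p = 1$ is the announced unique extension of holomorphic line bundles from $Y$ to $X$.
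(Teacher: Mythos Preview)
Your overall framework—local cohomology with supports in $K=\{f\le f(\xi_0)\}$ combined with the exponential sequence—matches the paper's, but the execution diverges at the crucial points and leaves genuine gaps.

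The first gap is the topological half. You propose to obtain $H^{p}_{K}(X,\mathbb{Z})=0$ for $p\le n-q$ by Morse theory on $f$, but $f$ is only continuous and has corners; Morse theory does not apply. Your proposed fix, ``handle by approximation,'' does not work: the Diederich--Forn\ae ss smoothing of a $q$-convex function with corners produces a $\tilde q$-convex function with $\tilde q$ strictly larger than $q$ in general (of order $n-\lfloor n/q\rfloor+1$), so the index bound $n+q-1$ and hence the cell-dimension bound $n-q+1$ are lost. The same objection undermines your analytic half: global Andreotti--Grauert bumping also needs smooth $q$-convex levels, and moreover $f$ is not assumed exhaustive, so $K$ need not be compact. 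The paper obtains the coherent-sheaf vanishing (its Lemma~4) not by bumping but by computing the local cohomology \emph{sheaves} $\underline{H^{r}_{S}}(\mathcal{F})$ stalkwise, via a local Mayer--Vietoris over the finitely many smooth $q$-convex pieces $\phi_1,\dots,\phi_s$ (Lemma~3), and then applying the Grothendieck spectral sequence; no global Morse theory or bumping enters.

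The second gap is the critical degree. To get $H^{3}_{K}(X,\mathcal{O}^{*})=0$ in the boundary case $q=n-3$ you must show that $H^{4}_{K}(X,\mathbb{Z})\to H^{4}_{K}(X,\mathcal{O})$ is injective. Your attack via a mixed Hodge structure on $H^{*}_{K}(X,\mathbb{C})$ has no footing: $X$ is an arbitrary complex manifold, not a quasi-projective variety, so no such structure is available, and the ``type'' argument cannot be run. The paper sidesteps this entirely by staying local: for a fundamental system of contractible Stein balls $U\ni\xi_{0}$ it shows (using Lemma~3 for $\mathcal{O}$ together with a Picard-group/divisor argument on $U\cap Y$) that $H^{0}(U,\mathcal{O}^{*})\cong H^{0}(U\cap Y,\mathcal{O}^{*})$ and $H^{p}(U\cap Y,\mathcal{O}^{*})=0$ for $p=1,2$, whence $\underline{H^{p}_{S}}(\mathcal{O}^{*})=0$ for $p\le 3$; the spectral sequence $E_{2}^{p,r}=H^{p}(X,\underline{H^{r}_{S}}(\mathcal{O}^{*}))\Rightarrow H^{p+r}_{S}(X,\mathcal{O}^{*})$ then gives $H^{p}_{S}(X,\mathcal{O}^{*})=0$ for $p\le 3$ with no global topological input on $X$ or $Y$ at all.
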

It should be noticed that in the above theorem the function $f\in F_{q}(X)$ is not necessarily exhaustive on $X$
and that the first lemma used in this paper was already obtained by the author in a preprint published on arXiv ~\cite{ref2}.\\
\\
\hspace*{.1in}We start by recalling some definitions concerning $q$-convexity.\\
\\
\hspace*{.1in}Let $X$ be a complex manifold. Then it is known that
a function $\phi\in C^{\infty}(X)$ is $q$-convex if for every
point $z\in X$, the Levi form
$L_{z}(\phi; z)$ has at most $q-1$ negative or zero eingenvalues
on each tangent space $T_{z}X$, $z\in X$.\\
\hspace*{.1in}We say that $X$ is $q$-complete if there exists
a $q$-convex function $\phi\in C^{\infty}(X,\mathbb{R})$
which is exhaustive on $X$ i.e. $\{x\in X: \phi(x)<c\}$
is relatively compact in $X$ for any $c\in\mathbb{R}.$\\
\hspace*{.1in}The space $X$ is said to be cohomologically $q$-complete
if for every coherent analytic sheaf ${\mathcal{F}}$ on $X$ the cohomology groups $H^{r}(X, {\mathcal{F}})$ vanish for all $r\geq q$.\\
\hspace*{.1in} A function $f : X\rightarrow \mathbb{R}$ is called $q$-convex
with corners , if $f$ is continuous and for each $x\in X$, there are a neighborhood
$U$ of $x$ in $X$ and $q$-convex functions $\phi_{1},\cdots, \phi_{r}$
on $U$ with $f|_{U}=max(\phi_{1},\cdots, \phi_{r})$.\\
We denote by $F_{q}(X)$ the set of $q$-convex functions with corners on $X$.\\
\section{Proof of the theorem}
In the proof of our generalization of Hartog's extension of line bundles, we shall need the following result (~\cite{ref3}, Proposition $12$) :
\begin{lm}{Let $D$ be a domain in $\mathbb{C}^{n}$, $\xi\in D$,
and let $\phi\in C^{\infty}(D )$ be a q-convex
function. Then for any coherent analytic sheaf ${\mathcal{F}}$ on $D$
there exists a fundamental system of Stein
neighborhoods $U\subset D$ of $\xi$ such that if $Y=\{z\in D:
\phi(z)>\phi(\xi)\}$, then $H^{p}(Y\cap U, {\mathcal{F}})=0$ for
$0<p<n-q$ and $H^{0}(U,{\mathcal{F}})\rightarrow H^{0}(U\cap Y, {\mathcal{F}})$ is an isomorphism.}
\end{lm}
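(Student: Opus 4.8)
The plan is to make everything local at $\xi$, to reduce the two assertions to a single local concavity statement across the level hypersurface $\{\phi=\phi(\xi)\}$, and then to recognize $Y\cap U$ as the strictly concave side of that hypersurface, where both the extension and the vanishing are governed by the $n-q+1$ positive eigenvalues of the Levi form. First I would normalize: after a translation assume $\xi=0$ and, subtracting a constant, $\phi(0)=0$. Expanding $\phi$ to second order at $0$ and absorbing the holomorphic quadratic part into a holomorphic function $h$ with $h(0)=0$, one writes $\phi(z)=2\operatorname{Re}h(z)+L_{0}(\phi)(z)+o(|z|^{2})$, where $L_{0}(\phi)$ is the Levi form at $0$. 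By $q$-convexity $L_{0}(\phi)$ has at least $m:=n-q+1$ positive eigenvalues, so after a complex-linear change of coordinates I may assume that $\phi$ is strictly plurisubharmonic along $E=\{z_{m+1}=\cdots=z_{n}=0\}$ and that $\rho:=-\phi$, the defining function of $Y=\{\phi>0\}$, has at least $m$ negative eigenvalues at $0$. I then take for $U$ a fundamental system of small Stein balls centred at $0$; Steinness gives $H^{p}(U,\mathcal F)=0$ for $p>0$.

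Next I would carry out the cohomological reduction. Writing $A=\{\phi\le 0\}\cap U$ and using the long exact sequence of local cohomology together with $H^{p}(U,\mathcal F)=0$ for $p\ge1$, the two claims combine into the single assertion
$$H^{p}_{A}(U,\mathcal F)=0\qquad(0\le p\le n-q);$$
here the cases $p=0,1$ yield the isomorphism on $H^{0}$, while the cases $2\le p\le n-q$ yield the vanishing of $H^{p}(U\cap Y,\mathcal F)$ in the stated range. Thus it suffices to understand $U\cap Y$, which is precisely the strictly concave side of the hypersurface $\{\phi=0\}$ at $\xi$.

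The heart of the proof, and the step I expect to be the main obstacle, is the model computation together with the passage from a general $\phi$ to the model. In the normalized quadratic model $U\cap Y$ is, up to biholomorphism, a product of a polydisc in the $q-1$ \emph{flat} variables with the trace on a ball of the complement of a strictly pseudoconvex body in the $m$ \emph{strictly convex} variables. Since $m=n-q+1\ge 2$, the classical Hartogs extension gives $H^{0}(U,\mathcal F)\xrightarrow{\sim}H^{0}(U\cap Y,\mathcal F)$, and the cohomology of the complement of a ball, combined with the K\"unneth formula over the Stein polydisc factor, gives $H^{p}=0$ for $0<p<m-1=n-q$. For an arbitrary $q$-convex $\phi$ the level set $\{\phi=0\}$ is only strictly concave in the $m$ good directions, so I would transfer the model conclusion by the Andreotti--Grauert bumping technique, comparing $\{\phi>c'\}$ with $\{\phi>c\}$ for $c'<c$ across elementary concave bumps supported where $\phi$ is strictly convex, and using that such a bump affects cohomology only outside the controlled range; equivalently one may solve $\bar\partial$ directly on $U\cap Y$ with H\"ormander--Morse type estimates furnished by the $m$ positive eigenvalues.

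The genuinely delicate points are the following. First, one must upgrade the finite-dimensionality supplied by concavity to honest \emph{vanishing} throughout $0<p<n-q$, which is exactly what forces us to shrink $U$ inside the fundamental system. Second, one must establish the surjectivity of $H^{0}(U,\mathcal F)\to H^{0}(U\cap Y,\mathcal F)$, i.e. the local Hartogs extension across the concave boundary point, where the hypothesis $m\ge 2$ is essential (injectivity being clear from the density of $U\cap Y$ in the connected set $U$). Finally, the case of a general coherent $\mathcal F$ should be reduced to that of $\mathcal O$ by means of a local free resolution near $\xi$, taking care that the extension and the vanishing are preserved under the connecting maps of the resolution.
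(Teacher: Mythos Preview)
The paper does not give its own proof of this lemma: it is simply quoted as Proposition~12 of Andreotti--Grauert \cite{ref3}, so there is no argument in the paper to compare with. Your proposal is essentially a sketch of the classical Andreotti--Grauert proof of that proposition: normalize $\phi$ at $\xi$ using the Taylor expansion so that the Levi form displays its $m=n-q+1$ positive eigenvalues, take a fundamental system of small Stein balls $U$, reduce via the local cohomology exact sequence to the vanishing of $H^{p}_{A}(U,\mathcal F)$ for $0\le p\le n-q$ with $A=U\cap\{\phi\le\phi(\xi)\}$, and establish the latter by a model computation (complement of a strictly pseudoconvex body in $m$ variables times a polydisc) together with the bumping technique. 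This is the right outline and the indexing is correct.

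Two small comments. First, your reduction to $\mathcal O$ via a local free resolution is unnecessary in the Andreotti--Grauert framework: their bumping lemmas and the resulting vanishing/extension statements are proved directly for an arbitrary coherent $\mathcal F$, so you may work with $\mathcal F$ throughout. Second, the point you flag as ``genuinely delicate'' --- upgrading finite-dimensionality to actual vanishing --- is precisely what the shrinking of $U$ accomplishes in Andreotti--Grauert, and your remark that this is where the fundamental system is used is exactly right.
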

In addition of  the theory of Andreotti-Grauert ~\cite{ref3}, the proof of the theorem uses the spectral sequence of Grothendieck ~\cite{ref6}
and also the Mittag-Leffler theorem in cohomolgy.
\begin{lm}{Let $X$ be a complex manifold of dimension $n$, and let $\phi:
X\rightarrow \mathbb{R}$ be a smooth $q$-convex function $\phi$ on $X$. Let $\xi_{0}\in X$
and $X'_{c}=\{x\in X: \phi(x)>c\},$ where $c=\phi(\xi_{0})$.
Then for any coherent analytic sheaf ${\mathcal{F}}$
on $X$ the restriction map
$$H^{p}(X,{\mathcal{F}})\rightarrow H^{p}(X'_{c}, {\mathcal{F}})$$
is bijective if $p\leq n-q-1$,\\
\hspace*{.1in}and injective if $p=n-q.$}
\end{lm}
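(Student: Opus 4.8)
The plan is to read the restriction map through the local cohomology of $\mathcal{F}$ along the closed set $A:=X\setminus X'_c=\{x\in X:\phi(x)\le c\}$. From the long exact sequence of the pair $(X,X'_c)$,
$$\cdots\to H^{p}_{A}(X,\mathcal{F})\to H^{p}(X,\mathcal{F})\to H^{p}(X'_c,\mathcal{F})\to H^{p+1}_{A}(X,\mathcal{F})\to\cdots,$$
the restriction is bijective in degree $p$ as soon as $H^{p}_{A}(X,\mathcal{F})=H^{p+1}_{A}(X,\mathcal{F})=0$, and injective as soon as $H^{p}_{A}(X,\mathcal{F})=0$. Hence everything reduces to the vanishing $H^{p}_{A}(X,\mathcal{F})=0$ for $p\le n-q$.

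To attack this I would run the Grothendieck spectral sequence of the composite functor $\Gamma_{A}(X,-)=\Gamma\bigl(X,\underline{\Gamma}_{A}(-)\bigr)$, namely $E_{2}^{s,t}=H^{s}\bigl(X,\mathcal{H}^{t}_{A}(\mathcal{F})\bigr)\Rightarrow H^{s+t}_{A}(X,\mathcal{F})$, where $\mathcal{H}^{t}_{A}(\mathcal{F})$ is the $t$-th local cohomology sheaf. The higher sheaves are computed locally from Lemma $1$. Their stalks are trivially zero off the level set $\{\phi=c\}$ for $t\ge1$, and at a boundary point $\xi$ with $\phi(\xi)=c$ I would pass to a chart $D\subset\mathbb{C}^{n}$ and take the cofinal family of Stein neighborhoods $U$ furnished by Lemma $1$, for which $H^{t}(U\cap Y,\mathcal{F})=0$ when $0<t<n-q$ and $H^{0}(U,\mathcal{F})\to H^{0}(U\cap Y,\mathcal{F})$ is an isomorphism, with $Y\cap U=X'_c\cap U$. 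Inserting this, together with the Stein vanishing $H^{t}(U,\mathcal{F})=0$ $(t\ge1)$, into the local long exact sequence with support in $Z:=U\cap A$ gives $H^{j}_{Z}(U,\mathcal{F})=0$ for $j\le n-q$, whence $\mathcal{H}^{t}_{A}(\mathcal{F})=0$ for $1\le t\le n-q$.

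This collapses the spectral sequence in the range $s+t\le n-q$ onto its bottom row, so that $H^{p}_{A}(X,\mathcal{F})\cong E_{2}^{p,0}=H^{p}\bigl(X,\underline{\Gamma}_{A}(\mathcal{F})\bigr)$ for $p\le n-q$, where $\underline{\Gamma}_{A}(\mathcal{F})$ is the subsheaf of sections supported in $A$, i.e. the extension by zero of $\mathcal{F}|_{\{\phi<c\}}$. To kill this bottom row I would exhaust $X$ by the superlevel sets $X'_{c'}=\{\phi>c'\}$ with $c'\downarrow\inf\phi$, compare consecutive levels by bumping across the thin shells $\{c'<\phi<c''\}$ in a Mayer--Vietoris argument fed again by Lemma $1$, and pass to the inverse limit using the Mittag-Leffler theorem in cohomology to annihilate the relevant $\varprojlim^{1}$. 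Feeding the resulting vanishing back into the sequence of the pair gives the asserted bijectivity for $p\le n-q-1$ and injectivity at $p=n-q$.

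I expect the bottom row to be the main obstacle. Lemma $1$ is purely local and only controls the higher local cohomology sheaves; the interior $\{\phi<c\}$ genuinely contributes through $\underline{\Gamma}_{A}(\mathcal{F})\ne0$, and the vanishing of $H^{s}\bigl(X,\underline{\Gamma}_{A}(\mathcal{F})\bigr)$ is exactly the Hartogs-type extension phenomenon, which is global and cannot be extracted from the local model alone. The delicate point is the verification of the Mittag-Leffler condition on the inverse system of cohomology groups in the top degree $p=n-q$, since it is precisely the possible non-surjectivity there that degrades the isomorphism to a mere injection.
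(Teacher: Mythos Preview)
Your architecture is exactly the paper's: invoke Lemma~1 to control the local cohomology sheaves $\mathcal{H}^{t}_{A}(\mathcal{F})$, feed this into the Grothendieck spectral sequence $E_{2}^{s,t}=H^{s}\bigl(X,\mathcal{H}^{t}_{A}(\mathcal{F})\bigr)\Rightarrow H^{s+t}_{A}(X,\mathcal{F})$, and then read off the restriction map from the long exact sequence of the pair $(X,X'_c)$. The paper, however, does not share your scruple about the bottom row: it simply asserts that $\underline{H^{r}_{S}}(\mathcal{F})=0$ for \emph{all} $r\le n-q$, including $r=0$, and concludes $H^{p}_{S}(X,\mathcal{F})=0$ for $p\le n-q$ in one stroke from the spectral sequence.

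Your worry is legitimate: the stalk of $\underline{\Gamma}_{A}(\mathcal{F})=\mathcal{H}^{0}_{A}(\mathcal{F})$ at a point of the open set $\{\phi<c\}$ is the full stalk $\mathcal{F}_{x}$, so the paper's blanket vanishing claim is not literally true whenever that interior is nonempty. In that sense you are being more careful than the paper. But your proposed remedy---exhaust $X$ by superlevel sets, bump across thin shells via Lemma~1 and Mayer--Vietoris, then pass to the limit by Mittag--Leffler---is left entirely as a programme, and it is precisely this global step (essentially the Andreotti--Grauert bumping technique) that carries the real weight of the lemma. As written, both your argument and the paper's share the same gap at the bottom row; the paper conceals it behind a citation, while you correctly flag it but do not close it.
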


\noindent
\begin{proof}
Let $V\subset\subset X$ be an open neighborhood of $\xi_{0}$ biholomorphic to a domain in $\mathbb{C}^{n}$.
Then there exists, by theorem $1$, a fundamental system of connected Stein neighborhoods $U\subset V$ of $\xi_{0}$ such that $H^{r}(U\cap X'_{c}, {\mathcal{F}})=0$ for $1\leq r<n-q$ and
$H^{0}(U, {\mathcal{F}})\rightarrow H^{0}(U\cap X'_{c}, {\mathcal{F}})$ is an isomorphism, or equivalently (See ~\cite{ref6} or
~\cite{ref1}), $\underline{H^{r}_{S}}({\mathcal{F}})=0$ for $r\leq
n-q,$ where
$\underline{H^{r}_{S}}({\mathcal{F}})$ is the cohomology sheaf
with support in $S=\{x\in X: \phi(x)\leq c\}$ and
coefficients in ${\mathcal{F}}.$ Furthermore, there exists a
spectral sequence
$$H^{p}_{S}(X, {\mathcal{F}})\Longleftarrow
E_{2}^{p,q}=H^{p}(X, \underline{H^{q}_{S}}({\mathcal{F}}))$$
Since $\underline{H^{p}_{S}}({\mathcal{F}})=0$ for $p\leq n-q,$ then for any $p\leq n-q,$
the cohomology groups $H^{p}_{S}(X, {\mathcal{F}})$ vanish and, the exact sequence of local
cohomology
\begin{center}
$\cdots\rightarrow H^{p}_{S}(X, {\mathcal{F}}) \rightarrow
H^{p}(X, {\mathcal{F}}) \rightarrow H^{p}(X'_{c}, {\mathcal{F}})
\rightarrow H^{p+1}_{S}(X, {\mathcal{F}})\rightarrow\cdots$
\end{center}
implies that $H^{p}(X, {\mathcal{F}}) \rightarrow H^{p}(X'_{c}, {\mathcal{F}})$
is bijective for any $c\in\mathbb{R}$ if $p\leq n-q-1$ and,
injective if $p=n-q$.
\end{proof}
\begin{lm}{Let $D$ be an open set in $\mathbb{C}^{n}$, $f\in F_{q}(D)$,
$1\leq q\leq n-2$. Then there exists for each point $\xi_{0}\in D$
a fundamental system of Stein neighborhoods $V$ of $\xi_{0}$ such that if $Y=\{z\in D : f(z)>f(\xi_{0})\}$, then for any coherent analytic sheaf ${\mathcal{F}}$ on $D$ we have :\\
(i) $H^{0}(V, \mathcal{F})\rightarrow H^{0}(V\cap Y, \mathcal{F})$ is bijective; \\
(ii) $H^{r}(V\cap Y, \mathcal{F})=0$ for $0<r<n-q$.}
\end{lm}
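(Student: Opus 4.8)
The plan is to argue by induction on the number $r$ of $q$-convex functions appearing in a local maximum representation of $f$, reducing everything, via the Mayer--Vietoris sequence, to Lemma $1$ for a single $q$-convex function.

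First I would localize. Since $f\in F_{q}(D)$, there is a neighborhood $U_{0}$ of $\xi_{0}$ and $q$-convex functions $\phi_{1},\dots,\phi_{r}$ on $U_{0}$ with $f=\max(\phi_{1},\dots,\phi_{r})$ there. Put $c=f(\xi_{0})$. Only the $\phi_{i}$ with $\phi_{i}(\xi_{0})=c$ are relevant near $\xi_{0}$: if $\phi_{i}(\xi_{0})<c$ then $\phi_{i}<c$ on a smaller neighborhood, so $\{\phi_{i}>c\}$ avoids it; after shrinking $U_{0}$ I may thus assume $\phi_{i}(\xi_{0})=c$ for all $i$, whence $Y\cap U_{0}=\bigcup_{i=1}^{r}Y_{i}$ with $Y_{i}=\{\phi_{i}>c\}$. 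Applying Lemma $1$ to each $\phi_{i}$ yields a fundamental system of Stein neighborhoods; taking a common refinement (a finite intersection of fundamental systems of neighborhoods is again one) I fix a fundamental system of connected Stein neighborhoods $V\subset U_{0}$ serving simultaneously all the $\phi_{i}$ and, inductively, the submaxima $\max_{i<r}\phi_{i}$.

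The inductive step sets $A=V\cap\{\max_{i<r}\phi_{i}>c\}$ and $B=V\cap Y_{r}$, so $V\cap Y=A\cup B$. The induction hypothesis applied to the $r-1$ functions $\phi_{1},\dots,\phi_{r-1}$ (whose maximum lies in $F_{q}$) gives $H^{0}(V,\mathcal{F})\cong H^{0}(A,\mathcal{F})$ and $H^{s}(A,\mathcal{F})=0$ for $0<s<n-q$, and Lemma $1$ gives the same for $B$. Feeding these into the Mayer--Vietoris sequence
$$\cdots\to H^{s-1}(A,\mathcal{F})\oplus H^{s-1}(B,\mathcal{F})\to H^{s-1}(A\cap B,\mathcal{F})\to H^{s}(A\cup B,\mathcal{F})\to H^{s}(A,\mathcal{F})\oplus H^{s}(B,\mathcal{F})\to\cdots$$
reduces both (i) and (ii) for $V\cap Y$ to the analogous conclusions for $A\cap B=V\cap\{\max_{i<r}\phi_{i}>c\}\cap Y_{r}$, namely $H^{0}(V,\mathcal{F})\cong H^{0}(A\cap B,\mathcal{F})$ and $H^{s}(A\cap B,\mathcal{F})=0$ for $0<s<n-q-1$. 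As in the proof of Lemma $2$, it is cleaner to phrase this through the local cohomology sheaves with supports in the closed sets $S_{i}=\{\phi_{i}\le c\}$: assertion (ii) for $V\cap Y$ is equivalent to $\underline{H^{s}_{S}}(\mathcal{F})_{\xi_{0}}=0$ for $s\le n-q$ with $S=\{f\le c\}=\bigcap_{i}S_{i}$, while the intersection term corresponds to supports in a \emph{union} of the $S_{i}$. The Mayer--Vietoris sequence for local cohomology with supports then couples the two types of statement, and $\underline{H^{0}_{S}}(\mathcal{F})_{\xi_{0}}=0$ already comes for free from the single-function vanishing of Lemma $1$.

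The main obstacle is precisely this intersection term. Geometrically $A\cap B$ is an intersection of super-level sets, equivalently the super-level set $\{\min(\phi_{1},\dots)>c\}$ of a minimum of $q$-convex functions, and such a minimum is neither $q$-convex nor $q$-convex with corners, so Lemma $1$ does not apply to it and no naive regularization of the maximum preserves the sharp bound $n-q$. I would anchor the coupled induction at the level of sections: to extend $s\in H^{0}(V\cap Y,\mathcal{F})$, extend each $s|_{V\cap Y_{i}}$ uniquely to $\tilde{s}_{i}\in H^{0}(V,\mathcal{F})$ by Lemma $1$, note that $\tilde{s}_{i}$ and $\tilde{s}_{j}$ agree on $V\cap Y_{i}\cap Y_{j}$, and glue. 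Injectivity of $H^{0}(V,\mathcal{F})\to H^{0}(V\cap Y,\mathcal{F})$ is immediate, since it factors through a single $H^{0}(V\cap Y_{i},\mathcal{F})$; the genuine work is the uniqueness required for the gluing, that a section on $V$ vanishing on $V\cap Y_{i}\cap Y_{j}$ vanishes identically. This is a global unique-continuation statement across $\{\phi_{j}=c\}$ inside the connected domain $V\cap Y_{i}$, which I would obtain by propagating the uniqueness half of Lemma $1$ over $V\cap Y_{i}$. Making this propagation valid for an arbitrary coherent sheaf, and verifying that a single fundamental system of Stein neighborhoods can be arranged to serve all finitely many functions and all induction steps at once, is where the care lies.
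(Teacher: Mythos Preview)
Your overall architecture matches the paper's: induction on the number of functions in the local $\max$, then Mayer--Vietoris with $A=V\cap\{\max_{i<r}\phi_i>c\}$ and $B=V\cap Y_r$. You also correctly isolate the crux, namely the intersection $A\cap B$, a super-level set of a \emph{minimum} of $q$-convex functions, to which neither Lemma~1 nor the inductive hypothesis applies.

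The gap is in how you propose to treat that term in positive degrees. The Mayer--Vietoris sequence for local cohomology with supports does not break the circularity: from $\underline{H^{s}_{S_i}}(\mathcal{F})=0$ one can reach $\underline{H^{s}_{S_1\cap S_2}}(\mathcal{F})=0$ only via $\underline{H^{s-1}_{S_1\cup S_2}}(\mathcal{F})=0$, and the complement of $S_1\cup S_2$ is precisely $A\cap B$, so you are asking for the very vanishing you want to prove. Your $H^0$ unique-continuation argument is essentially correct, but it is only the degree-zero shadow of what is needed; ``propagating the uniqueness half of Lemma~1'' does not by itself touch $H^{s}(A\cap B,\mathcal{F})$ for $s\ge 1$.

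What the paper does instead is the step you are missing. It applies Lemma~2 \emph{globally on the open manifold $Y_1\cap V$} to the smooth $q$-convex function $\phi_2$: for each level $c-\tfrac1j$ this yields
\[
H^{p}(Y_1\cap V,\mathcal{F})\ \cong\ H^{p}(Z_j,\mathcal{F}),\qquad Z_j=\{z\in Y_1\cap V:\ \phi_2(z)>c-\tfrac1j\},
\]
in the range $p\le n-q-1$. The $Z_j$ decrease to $Y_1\cap Y_2\cap V$, and the Mittag--Leffler theorem for the resulting inverse system gives
\[
H^{p}(Y_1\cap V,\mathcal{F})\ \cong\ H^{p}(Y_1\cap Y_2\cap V,\mathcal{F})
\]
in the same range, which is exactly the control of $A\cap B$ needed to close the Mayer--Vietoris argument. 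In short, the intersection is not handled by support combinatorics at all, but by noticing that $\phi_2$ is still smooth $q$-convex on $Y_1\cap V$, so Lemma~2 (not just its degree-zero consequence) applies there; the approximation by the $Z_j$ together with Mittag--Leffler compensates for the fact that the base point $\xi_0$ lies on the boundary of $Y_1\cap V$ rather than inside it.
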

\begin{proof}
Let $U$ be a Stein neighborhood of $\xi_{0}$ in $D$ such that there exist
finitely many $q$-convex functions $\phi_{1},\cdots, \phi_{s} : U\rightarrow \mathbb{R}$
with $f|_{U}=max(\phi_{1},\cdots, \phi_{s})$.\\
\hspace*{.1in}By lemma $1$, we can choose a Stein open neighborhood $V\subset U$ of $\xi_{0}$ so that
assertions (i) and (ii) are satisfied when $s=1$.\\
\hspace*{.1in}We, obviously, also may assume that the restriction
$f|_{U} : U\rightarrow \mathbb{R}$ is the maximum of two $q$-convex functions
$f|_{U}=max(\phi_{1},\phi_{2})$, which implies that $Y\cap U=Y_{1}\cup Y_{2},$
where $Y_{i}=\{z\in U : \phi_{i}(z)>f(\xi_{0})\}$ for $i=1, 2$.
If $q+1<n$, then there exists a Stein open neighborhood $V\subset U$ of $\xi_{0}$
in $D$ such that 
$H^{0}(V, \mathcal{F})\cong H^{0}(V\cap Y, \mathcal{F})$,
$H^{1}(V, \mathcal{F})\cong H^{1}(V\cap Y, \mathcal{F})=0$.
In fact, there exist by lemma $1$ Stein open neighborhoods $U_{i}\subset U$ of $\xi_{0}$ in $D$,
$i=1, 2$, such that $H^{0}(U_{i}, \mathcal{F})\cong H^{0}(Y_{i}\cap U_{i}, \mathcal{F})$,
$H^{1}(Y_{i}\cap U_{i}, \mathcal{F})\cong H^{1}(U_{i}, \mathcal{F})=0$. We set $V=U_{1}\cap U_{2}$.
If for $j\geq 1$
the open set $Z_{j}=\{z\in Y_{1}\cap V : \phi_{2}(z)>f(\xi_{0})-\frac{1}{j}\}$ is not empty,
then, according to lemma $2$, the restriction maps
$$H^{p}(V, \mathcal{F})\rightarrow H^{p}(Y_{1}\cap V, \mathcal{F}) \ \  \text{and} \ \ H^{p}(Y_{1}\cap V, \mathcal{F})\rightarrow H^{p}(Z_{j}, \mathcal{F})$$
are bijective for $p=0, 1$. Therefore by  Mittag-Leffler theorem one obtains 
$$H^{p}(Y_{1}\cap V, \mathcal{F})=H^p\left(\lim _{\longleftarrow} Z_j, \mathcal{F}\right) \cong \lim _{\longleftarrow} H^p\left(Z_j, \mathcal{F}\right) \cong H^p(Y_{1} \cap Y_{2}\cap V, \mathcal{F}) \ \ \text{for} \ \ p=0, 1$$
We infer that
$H^{0}(Y_{i}\cap V, \mathcal{F})\cong H^{0}(Y_{1}\cap Y_{2}\cap V, \mathcal{F})$ 
and $H^{1}(Y_{i}\cap V, \mathcal{F})\cong H^{1}(Y_{1}\cap Y_{2}\cap V, \mathcal{F})=0$
It follows from the Mayer-Vietoris sequence for cohomology
\begin{center}
$0\rightarrow H^{0}(V\cap Y, \mathcal{F})\rightarrow H^{0}(Y_{1}\cap V, \mathcal{F})\oplus
H^{0}(Y_{2}\cap V, \mathcal{F})\rightarrow H^{0}(Y_{1}\cap Y_{2}\cap V, \mathcal{F})\rightarrow
H^{1}(V\cap Y, \mathcal{F})\rightarrow 0$
\end{center}
that $H^{1}(V\cap Y, \mathcal{F})=0$ and
$H^{0}(V, \mathcal{F})\cong H^{0}(V\cap Y, \mathcal{F})$.\\
\hspace*{.1in}Now if $2\leq r<n-q$, then, a proof similar to the one used previously shows that there exists a Stein neighborhood $V\subset U$
of $\xi_{0}$ such that $H^{r-1}(Y_{i}\cap V, \mathcal{F})\cong H^{r}(Y_{i}\cap V, \mathcal{F})=0$ for $i=1, 2$
and
$H^{r-1}(Y_{1}\cap Y_{2}\cap V, \mathcal{F})\cong H^{r-1}(Y_{1}\cap V, \mathcal{F})=0$, then the Mayer-Vietoris sequence for cohomology
\begin{center}
$\cdots\rightarrow H^{r-1}(Y_{1}\cap V, \mathcal{F})\oplus
H^{r-1}(Y_{2}\cap V, \mathcal{F})\rightarrow H^{r-1}(Y_{1}\cap Y_{2}\cap V, \mathcal{F})\rightarrow
H^{r}(V\cap Y, \mathcal{F})\rightarrow
H^{r}(Y_{1}\cap V, \mathcal{F})\oplus
H^{r}(Y_{2}\cap V, \mathcal{F})\rightarrow\cdots$
\end{center}
implies that  $H^{r}(V\cap Y, \mathcal{F})=0$.
\end{proof}

\begin{lm}{Let $X$ be a complex manifold of dimension $n$, and let $f:
X\rightarrow \mathbb{R}$ be a continuous $q$-convex with corners function on $X$. Let $\xi_{0}\in X$
and $X'_{c}=\{x\in X: f(x)>c\},$ where $c=f(\xi_{0})$.
Then for any coherent analytic sheaf ${\mathcal{F}}$
on $X$ the restriction map
$$H^{p}(X,{\mathcal{F}})\rightarrow H^{p}(X'_{c}, {\mathcal{F}})$$
is bijective if $p\leq n-q-1$,\\
\hspace*{.1in}injective if $p=n-q.$}
\end{lm}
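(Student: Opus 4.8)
The plan is to run the argument of Lemma $2$ essentially verbatim, the single substantive change being that the local vanishing is now supplied by Lemma $3$ (the version with corners) in place of Lemma $1$ (the smooth version). Write $S=\{x\in X:f(x)\le c\}$, so that $X'_{c}=X\setminus S$, and let $\underline{H^{r}_{S}}(\mathcal F)$ denote the cohomology sheaf with support in $S$ and coefficients in $\mathcal F$. As in Lemma $2$, the whole proof reduces to the vanishing of these sheaves in the range $r\le n-q$; once this is in hand, the Grothendieck spectral sequence
$$H^{p}_{S}(X,\mathcal F)\Longleftarrow E_{2}^{p,r}=H^{p}\bigl(X,\underline{H^{r}_{S}}(\mathcal F)\bigr)$$
forces $H^{m}_{S}(X,\mathcal F)=0$ for $m\le n-q$, and the long exact sequence of local cohomology
$$\cdots\rightarrow H^{p}_{S}(X,\mathcal F)\rightarrow H^{p}(X,\mathcal F)\rightarrow H^{p}(X'_{c},\mathcal F)\rightarrow H^{p+1}_{S}(X,\mathcal F)\rightarrow\cdots$$
yields bijectivity of the restriction map for $p\le n-q-1$ and injectivity for $p=n-q$. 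These last two steps are purely formal and identical to those in Lemma $2$, so no new work is needed there.

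The heart of the matter is thus the local input, which I would obtain by computing stalks pointwise. Fix $\xi\in X$ and a coordinate neighbourhood around $\xi$ biholomorphic to a domain $D\subset\mathbb C^{n}$ on which $f$ is $q$-convex with corners; Lemma $3$ applies (it is valid for $1\le q\le n-2$, a range containing the values $1\le q\le n-3$ relevant to the main theorem). It produces a fundamental system of Stein neighbourhoods $V$ of $\xi$ with $H^{0}(V,\mathcal F)\cong H^{0}(V\cap Y_{\xi},\mathcal F)$ and $H^{r}(V\cap Y_{\xi},\mathcal F)=0$ for $0<r<n-q$, where $Y_{\xi}=\{z:f(z)>f(\xi)\}$. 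The decisive remark is that at a boundary point, i.e. one with $f(\xi)=c$, one has $Y_{\xi}\cap V=X'_{c}\cap V$; substituting this into the local cohomology sequence of the pair $(V,V\cap X'_{c})$ and using the Stein vanishing $H^{r}(V,\mathcal F)=0$ for $r>0$ gives $H^{r}_{S\cap V}(V,\mathcal F)=0$ for $r\le n-q$, so that the stalk of $\underline{H^{r}_{S}}(\mathcal F)$ at $\xi$ vanishes in this range after passing to the limit over $V$. At points with $f(\xi)>c$ the stalk is zero for trivial reasons. Invoking then the equivalence used in Lemma $2$ (see \cite{ref6}) between this family of local statements and the vanishing of $\underline{H^{r}_{S}}(\mathcal F)$ in the range $r\le n-q$ closes the argument.

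The step I expect to be the main obstacle is precisely the reconciliation of the ``moving level'' $Y_{\xi}=\{f>f(\xi)\}$ appearing in Lemma $3$ with the single ``fixed level'' set $S=\{f\le c\}$ governing the restriction map: Lemma $3$ speaks of each point relative to its own value $f(\xi)$, whereas the spectral sequence needs uniform control relative to the one value $c$. The two match perfectly only along the boundary $\{f=c\}$, which is exactly where all the cohomological content is concentrated and where the corners structure of $f$ — its local representation as a maximum of finitely many $q$-convex functions — enters, and \emph{only} through Lemma $3$; the interior $\{f<c\}$ must be treated with some care, being harmless for the sheaves $\mathcal O_{X}$, $\mathbb Z$ and $\mathcal O_{X}^{*}$ that occur in the application to line bundles by the identity principle. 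Once the stalk computation and the cited equivalence are settled, the remainder of the proof is formal and coincides with that of Lemma $2$.
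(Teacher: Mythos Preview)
Your approach is exactly that of the paper: invoke Lemma~3 in place of Lemma~1 to obtain the local vanishing, then run the spectral-sequence and local-cohomology argument of Lemma~2 verbatim. The ``moving level versus fixed level'' difficulty you flag at interior points of $S$ is not addressed in the paper's proof either --- it simply asserts $\underline{H^{r}_{S}}(\mathcal F)=0$ for $r\le n-q$ from the local computation --- so your proposal is already at least as complete as the original.
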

\begin{proof}
Let $V\subset\subset X$ be an open neighborhood of $\xi_{0}$ biholomorphic to a domain in $\mathbb{C}^{n}$.
Then there exists, by lemma $2$, a fundamental system of connected Stein neighborhoods $U\subset V$ of $\xi_{0}$ such that $H^{r}(U\cap X'_{c}, {\mathcal{F}})=0$ for $1\leq r<n-q$ and
$H^{0}(U, {\mathcal{F}})\rightarrow H^{0}(U\cap X'_{c}, {\mathcal{F}})$ is an isomorphism. Then a similar proof as that of lemma $2$ 
shows that the cohomology sheaf $\underline{H^{r}_{S}}({\mathcal{F}})$ with support in $S=\{x\in X: f(x)\leq c\}$ and
coefficients in ${\mathcal{F}}$ vanishes for $r\leq n-q$.
Therefore, by using the spectral sequence
$$H^{p}_{S}(X, {\mathcal{F}})\Longleftarrow
E_{2}^{p,q}=H^{p}(X, \underline{H^{q}_{S}}({\mathcal{F}}))$$
we find that the cohomology group $H^{p}_{S}(X, {\mathcal{F}})=0$ for $p\leq n-q$.
and, the exact sequence of local
cohomology
\begin{center}
$\cdots\rightarrow H^{p}_{S}(X, {\mathcal{F}}) \rightarrow
H^{p}(X, {\mathcal{F}}) \rightarrow H^{p}(X'_{c}, {\mathcal{F}})
\rightarrow H^{p+1}_{S}(X, {\mathcal{F}})\rightarrow\cdots$
\end{center}
implies that $H^{p}(X, {\mathcal{F}}) \rightarrow H^{p}(X'_{c}, {\mathcal{F}})$
is bijective if $p\leq n-q-1$ and,
injective if $p=n-q$.
\end{proof}
\begin{center}
\bf{Proof of the theorem.}
\end{center}
\begin{proof}
Let $V\subset\subset X$ be an open neighborhood of  $\xi_{0}$ that can be identified to a domain in $\mathbb{C}^{n}$.
Then there exists, by lemma $1$, a fundamental system of connected Stein neighborhoods $U\subset V$ of $\xi_{0}$ such that 
for every ${\mathcal{F}}\in coh(X)$, $H^{p}(U\cap Y, {\mathcal{F}})=0$ for $p=1, 2$ and
$H^{0}(U, {\mathcal{F}})\rightarrow H^{0}(U\cap Y, {\mathcal{F}})$ is an isomorphism.\\
\hspace*{.1in}We may choose the open sets $U$ to be open balls centered at $\xi$ with radius $r \to 0$. Then $U$ is contractible. Therefore $H^{p}(U,\mathbb{Z})=0$ for $p\geq 1$ and from the long exact sequence of cohomology associated to the short exact sequence of sheaves
$$
0 \;\longrightarrow\; \mathbb{Z} \;\xrightarrow{\;\times 2i\pi\;}\; \mathcal{O} \;\xrightarrow{\;\exp\;}\; \mathcal{O}^{*} \;\longrightarrow\; 0,
$$
it follows that $H^{p}(U,\mathcal{O}^{*}) = 0$ for $p\geq 1$.\\
\hspace*{.1in}It should be noted first that if $L$ is a trivial holomorphic line bundle on $U\cap Y$, then it is clear that $L$ is analytically trivial.
In fact, since the restriction map 
$$H^{p}(U, {\mathcal{O}})\rightarrow H^{p}(U\cap Y, {\mathcal{O}})$$ 
is bijective for $p=0,1, 2$ and $U$
is connected, then $H^{p}(U\cap Y,{\mathcal{O}})=0$ for $p=1,2$ and the restriction map
$$H^{0}(U, {\mathcal{M}^{*}})\longrightarrow H^{0}(U\cap Y, {\mathcal{M}^{*}})$$ 
is also bijective.\\
\hspace*{.1in}Let $\mathcal{D}_{U\cap Y}$ denote the sheaf of Cartier divisors on $U\cap Y$, that is $\mathcal{D}_{U\cap Y}=\mathcal{M}_{U\cap Y}^{\star} / \mathcal{O}_{U\cap Y}^{\star}$. 
Then there exists an exact sequence of sheaves on $U\cap Y$ :
$$
1 \longrightarrow \mathcal{O}_{U\cap Y}^{\star} \longrightarrow \mathcal{M}_{U\cap Y}^{\star} \longrightarrow \mathcal{D}_{U\cap Y} \longrightarrow 0
$$
and the associated exact sequence of cohomology 
$$\cdots\rightarrow H^{0}(U\cap Y, {\mathcal{D}}_{U\cap Y})\rightarrow H^{1}(U\cap Y, {\mathcal{O}^{*}_{U\cap Y}})\rightarrow H^{1}(U\cap Y, {\mathcal{M}}^{*})\rightarrow\cdots $$
Since $H^{1}(U\cap Y,{\mathcal{O}})=0$, then from the exact sequence of cohomology
$$\begin{aligned}
&H^1(U\cap Y, \mathcal{O}) \longrightarrow H^1\left(U\cap Y, \mathcal{O}^*\right) \longrightarrow H^2(U\cap Y, \mathbb{Z}) .
\end{aligned}$$
it follows that the set of topologically trivial holomorphic line bundles 
$$\operatorname{Pic}^0(U\cap Y)= Ker(H^1\left(U\cap Y, \mathcal{O}^*\right) \longrightarrow H^2(U\cap Y, \mathbb{Z}))$$ 
is contained in $Im(\mathcal{D}(U\cap Y)\stackrel{\delta_{U\cap Y}}\longrightarrow Pic(U\cap Y)\cong H^{1}(U\cap Y, {\mathcal{O}}^{*})$).\\
Therefore, if $L$ is a trivial holomorphic line bundle over $Y\cap U$, 
since $H^1(U\cap Y, \mathcal{O})=0$,  then $L$ is associated to some Cartier divisor $D$ on $U\cap Y$.
It is known that a holomorphic line bundle $L$ over a connected complex manifold $M$ is associated to a Cartier divisor if and only if
$L$ admits a non-trivial meromorphic section. Then there exists $\sigma\in H^{0}(U\cap Y, {\mathcal{M}^{*}})$
such that $L=O(div(\sigma))$. Since $H^0\left(U, \mathcal{O}\right)\cong H^0\left(U\cap Y, \mathcal{O}\right)$ and $U$ is connected, then the restriction map 
$$H^0\left(U, \mathcal{M}^*\right) \longrightarrow \left(U \cap Y, \mathcal{M}^*\right)$$ 
is bijective, therefore there exists a unique $\tilde{\sigma}\in H^{0}(U, \mathcal{M}^{*})$ such that
$\tilde{\sigma}|_{U\cap Y}=\sigma$. Since by assumption $\tilde{L}=O(div(\tilde{\sigma}))$ is analytically trivial, then obviously 
the class $[L]=[O(div(\tilde{\sigma}|_{U\cap Y})]=[\tilde{L}|_{U\cap Y}]=0$.\\
\hspace*{.1in}We are now going to prove that
$H^{1}(U\cap Y, \mathcal{O}^{*})=0$. 
Since the restriction map $\Gamma(U,\mathcal{O})\rightarrow \Gamma(U\cap Y,\mathcal{O})$ is bijective,
then $\Gamma(U,\mathcal{O}^{*})\rightarrow \Gamma(U\cap Y,\mathcal{O}^{*})$ is also bijective.\\
It is sufficient, according to the exact sequence of local cohomology :
$$0=H^{1}(U,{\mathcal{O}^{*}})\rightarrow H^{1}(U\cap Y,{\mathcal{O}^{*}})\rightarrow H^{2}_{S}(U,{\mathcal{O}}^{*})\rightarrow H^{2}(U,{\mathcal{O}}^{*})=0,$$
where $S=U\setminus Y$, to show that $H^{2}_{S}(U,{\mathcal{O}}^{*})=0$.
For this, we consider the exact sequence of local cohomology :
$$\cdots\rightarrow H^{2}_{S}(U,{\mathcal{O}})\rightarrow H^{2}_{S}(U,{\mathcal{O}}^{*})\rightarrow H^{3}_{S}(U,\mathbb{Z})\rightarrow\cdots$$
Since $H^{2}(U, {\mathcal{O}})=H^{1}(U\cap Y, {\mathcal{O}})=0$, then $H^{2}_{S}(U,{\mathcal{O}})=0$.
It remains to prove that $H^{3}_{S}(U,\mathbb{Z})=0$. To see this, we consider the commutative diagram with exact rows
$$\begin{array}{cccccc}
H^{1}(U,\mathcal O^*) 
  & \xrightarrow{\alpha} 
  & H^2(U,\mathbb Z) 
  & \xrightarrow{} 
  & H^2(U,\mathcal O)=0 \\[6pt]
\downarrow r_1 
  & 
  & \downarrow r_2 
  & 
  & \downarrow \\[6pt]
H^{1}(U\cap Y,\mathcal O^*) 
  & \xrightarrow{\beta} 
  & H^2(U\cap Y,\mathbb Z) 
  & \xrightarrow{} 
  & H^2(U\cap Y,\mathcal O)=0
\end{array}$$
Since $r_{1}$ is an isomorphism and $\beta$ is surjective, it follows that $r_{2}$ is surjective.
Then by considering the exact sequence 
$$\cdots\rightarrow H^{2}(U,\mathbb{Z})\stackrel{r_{2}}\rightarrow H^{2}(U\cap Y,\mathbb{Z})\rightarrow H^{3}_{S}(U, \mathbb{Z})\rightarrow H^{3}(U, \mathbb{Z})$$
and noting that $r_{2}$ is surjective and $H^{3}(U, \mathbb{Z})=0$, we find that $H^{3}_{S}(U, \mathbb{Z})=0$ and therefore
$H^{1}(U\cap Y, \mathcal{O}^{*})=0$.\\
\hspace*{.1in}Now since $H^{0}(U, {\mathcal{O}^{*}})\rightarrow H^{0}(U\cap Y, {\mathcal{O}^{*}})$ is bijective
and $H^{p}(U, \mathcal{O}^{*})=H^{p}(U\cap Y, \mathcal{O}^{*})=0$ for $p=1, 2$, it follows from
~\cite{ref6} that $\underline{H^{p}_{S}}({\mathcal{O}^{*}})=0$ for $p\leq
3,$ where
$\underline{H^{p}_{S}}({\mathcal{O}^{*}})$ is the cohomology sheaf
with support in $S=\{z\in X: \phi(z)\leq c\}$ and
coefficients in ${\mathcal{O}^{*}}.$ We deduce from the spectral sequence
$$H^{p}_{S}(X, {\mathcal{O}^{*}})\Longleftarrow
E_{2}^{p,q}=H^{p}(X, \underline{H^{q}_{S}}({\mathcal{O}^{*}}))$$ 
that the cohomology group $H^{p}_{S}(X, {\mathcal{O}^{*}})$ vanishes for any $0\leq p\leq 3$ and, the exact sequence of local
cohomology
\begin{center}
$\cdots\rightarrow H^{p}_{S}(X, {\mathcal{O}^{*}}) \rightarrow
H^{p}(X, {\mathcal{O}^{*}}) \rightarrow H^{p}(Y, {\mathcal{O}^{*}})
\rightarrow H^{p+1}_{S}(X, {\mathcal{O}^{*}})\rightarrow\cdots$
\end{center}
implies that $H^{p}(X, {\mathcal{O}^{*}}) \rightarrow H^{p}(Y, {\mathcal{O}^{*}})$
is bijective if $0\leq p\leq 2$,\\ 
and injective if $p=3$.
\end{proof}
\newpage

\end{document}